\documentclass[11pt]{amsart}
\usepackage{bbm}
\usepackage{amsfonts}
\usepackage{mathrsfs}
\usepackage{hyperref}
\usepackage{amssymb}
\usepackage{CJK}
  \newtheorem{thm}{Theorem}[section]
 \newtheorem{cor}[thm]{Corollary}
  \newtheorem{con}[thm]{Conjecture}
 \newtheorem{prop}[thm]{Corollary}
 \newtheorem{lem}[thm]{Lemma}
 \newtheorem{ex}[thm]{Example}
  
 \theoremstyle{remark}
 
 \numberwithin{equation}{section}
 \newcommand{\sign}{\textup{sign}}
\newcommand{\order}{\textup{o}}

\begin{document}
\title[\hfil On the Borwein conjecture]{On the Borwein conjecture}

\author{Jiyou Li}
\address{School of Mathematical Sciences, Shanghai Jiao Tong University, Shanghai, P.R. China}
\email{lijiyou@sjtu.edu.cn}

\address{Department of Mathematics, MIT, MA, USA}
\email{jiyouli@mit.edu}

\thanks{This work is supported by the National Science Foundation of China (11771280) and the National Science Foundation of Shanghai Municipal (17ZR1415400, 19ZR1424100). }
\begin{abstract}
 A conjecture of Borwein asserts that for any positive integers $n$ and $k$,  the coefficient $a_{3k}$ of $q^{3k}$ in the expansion of  $\prod_{j=0}^n (1-q^{3j+1})(1-q^{3j+2})$ is nonnegative.
 In this paper we prove that for any $0 \leq k\leq n$, there is a constant $0<c<1$ such that
 $$a_{3k}+a_{3(n+1)+3k}+\cdots+a_{3n(n+1)+3k}=\frac {2\cdot 3^{n}} {n+1}(1+O(c^n)).$$
 In particular,
 $$a_{3k}+a_{3(n+1)+3k}+\cdots+a_{3n(n+1)+3k}>0.$$
 \end{abstract}

\maketitle \numberwithin{equation}{section}
\newtheorem{theorem}{Theorem}[section]
\newtheorem{lemma}[theorem]{Lemma}
\newtheorem{example}[theorem]{Example}
\allowdisplaybreaks

\section{Introduction}
In this note we investigate the following problem raised by Borwein.
\begin{con} [P. Borwein, 1990]
For the coefficients $a_j$ defined by $$\prod_{j=0}^n (1-q^{3j+1})(1-q^{3j+2})=\sum_{j\geq 0}a_j q^j,$$
if $j\equiv0\pmod{3}$, then $a_j\geq 0$, and else $a_j\leq 0$.
\end{con}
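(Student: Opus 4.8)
The plan is to prove the full sign pattern, which I will phrase through the cubic dissection of $P_n(q):=\prod_{j=0}^n(1-q^{3j+1})(1-q^{3j+2})$. Writing $P_n(q)=A_n(q^3)-qB_n(q^3)-q^2C_n(q^3)$ and $\omega=e^{2\pi i/3}$, the roots-of-unity filter gives
\[
A_n(q^3)=\tfrac{1}{3}\sum_{t=0}^2 P_n(\omega^t q),\qquad qB_n(q^3)=-\tfrac{1}{3}\sum_{t=0}^2\omega^{-t}P_n(\omega^t q),
\]
and the analogous expression for $C_n$, so the conjecture is exactly the statement that $A_n,B_n,C_n$ have nonnegative coefficients. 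First I would record a palindromic symmetry to halve the work: since each factor satisfies $1-q^{-m}=-q^{-m}(1-q^m)$ and there are an even number $2(n+1)$ of them, one gets $P_n(1/q)=q^{-3(n+1)^2}P_n(q)$, hence $a_N=a_{3(n+1)^2-N}$. As $3(n+1)^2\equiv0\pmod 3$, this reflection fixes the residue class $0$ and interchanges classes $1$ and $2$; therefore $B_n$ and $C_n$ are reverses of one another, and it suffices to prove $A_n\ge0$ and $B_n\ge0$ coefficientwise.

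Next I would use the closed form $P_n(q)=(q;q)_{3n+3}/(q^3;q^3)_{n+1}$ and the integral representation $a_N=\frac{1}{2\pi i}\oint P_n(q)\,q^{-N-1}\,dq$ over a suitable circle $|q|=\rho$, and carry out a circle-method analysis. The major arc near $q=1$ should furnish the dominant main term, its size governed by the dilogarithm asymptotics of $\log(q;q)_m$, and this term is positive. The only competing contributions come from the minor arcs near the primitive cube roots of unity $q=\omega,\omega^2$, where both $(q;q)_{3n+3}$ and $(q^3;q^3)_{n+1}$ acquire zeros of order $n+1$ and $|P_n|$ is again large. After the dissection all three saddles $1,\omega,\omega^2$ feed each of $A_n,B_n,C_n$, and the aim is to show that, once these are combined, the net contribution carries the claimed sign for every admissible $N$. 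I expect this to give positivity for all $n\ge N_0$ with an explicit computable $N_0$, after which the finitely many cases $n<N_0$ are settled by direct expansion.

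The hard part will be making the minor-arc estimates near $\omega$ and $\omega^2$ simultaneously uniform in $N$ and sharp enough to remain strictly below the main term across the entire range $0\le N\le 3(n+1)^2$. The difficulty is genuine: near the center the coefficients are large and their sign is never in doubt, but the conjecture must also hold where the coefficients of $P_n$ are small, and there a loose bound on the secondary saddles could swamp the main term and spuriously flip a sign. Controlling the transition between major and minor arcs, and proving that the cube-root-of-unity contributions interfere destructively in precisely the manner the dissection demands, is where essentially all the labor lies — this is exactly the step at which the earlier elementary and $q$-series attempts stalled, and the reason a clean sign-reversing involution on partitions into distinct parts $\not\equiv0\pmod 3$ (which would give the sign of each $a_N$ combinatorially) has so far remained out of reach.
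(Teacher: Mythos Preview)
The statement you are trying to prove is not proved in the paper at all: it is stated there as an \emph{open conjecture}. The paper's contribution is the much weaker Theorem~1.5, namely that for each residue $3k$ the arithmetic-progression sum
\[
a_{3k}+a_{3k+N}+\cdots+a_{3k+nN}\qquad(N=3(n+1))
\]
is strictly positive. That is obtained by an exact character-sum/sieve computation of the subset-sum counts $M(k,b,D)$ over $\mathbb{Z}_N$ (Theorem~1.4), not by any analysis of individual coefficients $a_j$. So there is no ``paper's own proof'' of the full sign pattern to compare against.

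Your proposal, by contrast, aims at the full conjecture via a circle-method/saddle-point attack on the integral representation of $a_N$. The preliminary reductions you record are correct and standard: the palindromic relation $P_n(1/q)=q^{-3(n+1)^2}P_n(q)$ and the consequence that $B_n$ and $C_n$ are reverses of each other. But what you have written is a \emph{plan}, not a proof. You yourself identify the crux --- obtaining minor-arc bounds near $q=\omega,\omega^2$ that are uniform in $N$ across the whole range $0\le N\le 3(n+1)^2$ and strictly dominated by the main saddle at $q=1$ --- and then stop short of carrying it out. That step is the entire difficulty; saying you ``expect'' it to work for $n\ge N_0$ is not an argument. In particular, for $N$ near the extremes of the range the coefficients $a_N$ are small and the three saddle contributions are of comparable magnitude, so a naive triangle-inequality bound on the secondary saddles cannot succeed; one needs to track the phases and exhibit genuine cancellation, and nothing in your outline indicates how you would do this. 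Until that analysis is supplied, the proposal has a genuine gap at exactly the point where the problem is hard.
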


The conjecture is actually the first one among three conjectures raised by Borwein.  Another two ask similar positivity questions for $\prod_{j=0}^n ((1-q^{3j+1})(1-q^{3j+2}))^2$  and $\prod_{j=0}^n (1-q^{5j+1})(1-q^{5j+2})(1-q^{5j+3})(1-q^{5j+4})$.
Recently, a remarkable proof of the first conjecture was given by Wang \cite{Wang} using analytic methods.  Unfortunately, the remaining conjectures are still open. More about the story and more related work can be found in \cite{A, ABB, B, RP, W1, W2, Z}. The purpose of this note is to investigate these conjectures from an enumerative approach and to find potential relations to number theory.

If we define three polynomials $A_n(x), B_n(x)$, and $C_n(x) $ by
$$\prod_{j=1}^n (1-q^{3j+1})(1-q^{3j+2})=A_n(q^3)-qB_n(q^3)-q^2C_n(q^3),$$
then the conjecture is equivalent to saying that all the three polynomials have non-negative coefficients.
One may write $A_n(q)=\sum_{i}a_{3i} q^i$
 as a sum of $q$-binomial coefficients with alternating
signs by the $q$-binomial theorem (for a proof, see \cite{A}) as follows:
$$A_n(q)=\sum_{k=-\lfloor n/3\rfloor}^{\lfloor n/3\rfloor}(-1)^k q^{k(9k-1)/2}{2n \choose n+3k }_q.$$
Here ${2n \choose n+3k }_q$ is the Gaussian binomial coefficients.
Unfortunately, it seems not obvious why the coefficients in this sum
 are always non-negative.

On the other hand, it is clear the number $$a_j=C_e(j, n)-C_o(j, n),$$ where
$C_e(j, n)$ (respectively $C_o(j, n)$) is the number of partitions of $j$ into an even (respectively an odd) number of distinct non-multiples of 3 each less than $3n+3$.

In this note we will take the advantage of regarding
$C_e(j, n)$ (respectively $C_o(j, n)$) as counting suitable subsets in
a particular subset. Precisely, let $D=\{1, 2, 4, 5, \cdots, 3n+1, 3n+2\}$.
Then    \begin{align*}
C_e(j, n)&=\#\{S\subseteq D, |S|\equiv 0 \pmod{2}, \sum_{x\in S}x=j\};\\
C_o(j, n)&=\#\{S\subseteq D, |S|\equiv 1 \pmod{2}, \sum_{x\in S}x=j\}.
 \end{align*}
Then the conjecture is equivalent to a typical \texttt{"}parity counting\texttt{"} problem in subsets of integers, which is to determine if
$C_e(j, n)\geq C_o(j, n).$

We fix a positive integer $n$ and assume $0\leq j\leq 3(n+1)^2$.
In this note we shall focus on the study of $a_j=C_e(j, n)-C_o(j, n)$.
It turns out to be a challenging task to give a reasonable estimate for $a_j$, though the following has been shown when $n$ is replaced by $\infty$ \cite{A}.
\begin{thm}[Garvan, Borwein and Andrews]
 For any prime $p$, let
  $$\frac{\prod_{j=1}^\infty (1-q^j)}{ \prod_{j=1}^\infty (1-q^{pj})}=\sum_{j\geq 0}b_{p, j} q^j.$$
  Then for any $j\geq 0$, $$b_{p, j} b_{p, j+p}\geq 0.$$
\end{thm}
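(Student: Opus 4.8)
The plan is to prove the statement by combining the $p$-dissection of Euler's product with the positivity of the partition generating function. First I would cancel the common factors to record
\[
\frac{\prod_{j\geq 1}(1-q^j)}{\prod_{j\geq 1}(1-q^{pj})}=\prod_{p\nmid n}(1-q^n)=\Big(\sum_{k=-\infty}^{\infty}(-1)^k q^{g_k}\Big)\cdot\prod_{n\geq 1}\frac{1}{1-q^{pn}},
\]
where $g_k=k(3k-1)/2$ and the first factor is Euler's pentagonal number theorem. The second factor depends only on $q^p$ and, being $\sum_{t\geq 0}p(t)q^{pt}$ with $p(t)$ the ordinary partition numbers, has \emph{nonnegative} coefficients. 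Hence, extracting the progression $j\equiv r\ (\bmod\ p)$ and setting $Q=q^p$, I reduce the theorem to showing that each residue component
\[
A_r(Q)=\sum_{m\geq 0}a_{p,\,r+pm}\,Q^m=\frac{N_r(Q)}{\prod_{n\geq 1}(1-Q^n)},\qquad N_r(Q)=\sum_{k:\,g_k\equiv r\,(\bmod\ p)}(-1)^k Q^{(g_k-r)/p},
\]
has no two adjacent coefficients of strictly opposite sign. The point is that multiplication by $\prod 1/(1-Q^n)$ preserves the property of being sign-definite, so if $N_r$ (or a suitable rewriting of it) is sign-definite, so is $A_r$, and this implies $a_{p,j}a_{p,j+p}\geq 0$.

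Next I would exploit that, for odd $p$, the map $k\mapsto g_k\ (\bmod\ p)$ is periodic of period $p$ and $(-1)^{k+p}=-(-1)^k$; substituting $k=k_0+p\ell$ therefore collapses each residue class $k_0$ into a single Jacobi theta series $(-1)^{k_0}\sum_\ell(-1)^\ell Q^{\,a\ell^2+b_{k_0}\ell}$, which by the Jacobi triple product equals an explicit infinite product in $Q$ (the case $p=2$, where this periodicity fails, is elementary and I would treat it directly). Since $k\mapsto g_k\ (\bmod\ p)$ is a quadratic, it is two-to-one away from its vertex, so a generic residue $r$ receives exactly two such theta series, indexed by $k_0$ and its reflection $k_0'$, whereas the vertex residue receives only one. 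For the vertex residue $N_r$ is a single theta function and $A_r=N_r/(Q;Q)_\infty$ is, up to an overall sign, a manifestly sign-definite quotient, so that case follows at once.

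The crux, and the step I expect to be the main obstacle, is the generic residue: there $(-1)^{k_0}=-(-1)^{k_0'}$, so $N_r$ is a \emph{difference} of two theta products of opposite sign and its own coefficients already oscillate (for instance at $p=5$, $r=2$ one computes $N_2(Q)=-1+Q-Q^2+\cdots$), so a naive sign argument cannot work. To finish I would aim to show that this difference, after division by $\prod(1-Q^n)$, telescopes into a single object that is manifestly sign-definite, conjecturally $\pm\,Q^{c}$ times a product with nonnegative coefficients. The natural tool is a theta addition/subtraction identity exploiting that the two series attached to $k_0$ and $k_0'$ share the same quadratic coefficient $a$ and carry reflected linear terms; the quintuple product identity is the expected mechanism for collapsing such a paired combination into a single product. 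Establishing this factorization with one consistent sign, \emph{uniformly in} $p$, is the delicate point: I would first confirm it for small primes to pin down the correct product form and the sign, and then seek a uniform derivation of the factorization from the triple and quintuple product identities, at which stage cancelling $(Q;Q)_\infty$ against the denominator exhibits $A_r(Q)$ as $\pm$ a nonnegative series and yields $a_{p,j}a_{p,j+p}\geq 0$.
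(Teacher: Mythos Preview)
The paper does not contain its own proof of this theorem: it is stated with a citation to Andrews~\cite{A} and nothing more. What the paper does prove, in Section~4, is the strictly weaker Theorem~\ref{thm1.3} (Stanley), i.e.\ only the residue class $j\equiv 0\pmod p$. For that single residue the paper proceeds exactly as you do in your ``vertex'' branch: write the pentagonal sum, pick out the two arithmetic progressions of $n$ contributing to exponents divisible by $p$, apply the Jacobi triple product to each to obtain two infinite products, and divide by $(q;q)_\infty$ to exhibit $a_{p,pk}$ as a sum of two restricted partition counts, hence nonnegative. So on the one case the paper actually treats, your approach and the paper's coincide; your proposal simply aims at more than the paper attempts.

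That said, your proposal does not yet prove the full statement. You correctly isolate the obstruction for a generic residue $r$: the two theta pieces coming from $k_0$ and $k_0'$ can carry opposite overall signs (your $p=5$, $r=2$ example is exactly this), so $N_r(Q)$ is a genuine difference and dividing by $(Q;Q)_\infty$ yields $A_r=-P_1+P_2$ with both $P_i$ nonnegative, which is not automatically sign--definite. Your resolution is only programmatic: you say the difference ``conjecturally'' collapses to $\pm Q^{c}$ times a nonnegative product, invoke the quintuple product identity as the hoped-for mechanism, and propose to verify small primes before ``seeking'' a uniform derivation. None of this is carried out. The specific identity that would exhibit $A_r(Q)$ as a sign-definite series for every $r$ and every odd prime $p$ is precisely the substance of the theorem, and until you state it explicitly and prove it (or otherwise show $P_2\le P_1$ coefficientwise), the generic-residue case---and hence the theorem---remains open in your argument.
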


For the case $p\mid j$, Stanley  \cite{St} gives a direct proof of this result. His proof also expresses $b_{p, j}$ as a sum of two restricted partition functions (here \text{``}restricted\text{"} means each part of the partition does not belong to some congruence classes) of $j/p$.
 \begin{thm}[Stanley]\label{thm1.3}
Suppose  $b_{p, j}$ are defined as above.
   Then for any $p\mid j\geq 0$, $$b_{p, j} b_{p, j+p}\geq 0.$$
    Moreover,
     \begin{align*}
     b_{p, pj}=P_{\not\equiv 0,  \frac {3p\pm1}{2}\pmod{3p}}(j)+P_{\not\equiv 0, \frac {(3-2t)p-1}{2}, \frac {(3+2t)p+1}{2}\pmod{3p}}(j-{\frac {t(pt+1)}6}),
      \end{align*}
    where $P$ counts the number of restricted partitions (for instance,
    the first term in the right side of the equality
     counts the number of partitions of $j$ whose parts do not belong to $0, \frac {3p-1}{2}, \frac {3p+1}{2}\pmod{3p}$ and $t$ is the smallest positive integer such that $3 \mid (pt+1)$ (if such $t$ does not exist, then the corresponding $P$ is 0).
\end{thm}

For interested readers we include the proof of Theorem \ref{thm1.3} in Section 4.

\begin{cor}
Let $P^*_{e, \not\equiv 0 \pmod{3}}(3j)$ (respectively $P^*_{o, \not\equiv 0\pmod{3}}(3j)$ be the number of partitions of $3j$ into an even (respectively an odd) number of distinct non-multiples of 3.
Then we have the equality
$$ P_{\not\equiv 0, 4, 5\pmod{9}}(j)=P^*_{e, \not\equiv 0\pmod{3}}(3j)-P^*_{o, \not\equiv 0\pmod{3}}(3j).$$

\end{cor}

\begin{proof}
By the counting formula in Theorem \ref{thm1.3} for the case $p=3$, $b_{3, 3j}$ is exactly the numbers $a_{3j}$ appearing in the Borwein conjecture  for $n=\infty$. Explicitly
 if we let
 $$\prod_{j=0}^{\infty} (1-q^{3j+1})(1-q^{3j+2})=\sum_{j\geq 0}b_{3, 3j} q^j,$$
 then $$b_{3, 3j}=P_{\not\equiv 0, 4, 5\pmod{9}}(j).$$

 Recall $a_j=C_e(j, n)-C_o(j, n)$ and  for $n=\infty$ we have $$b_{3, 3j} =P^*_{e, \not\equiv 0\pmod{3}}(3j)-P^*_{o, \not\equiv 0\pmod{3}}(3j).$$
  The equality then follows from Theorem \ref{thm1.3}.
\end{proof}

 For instance, when $j=24$, $P^*_{e, \not\equiv 0\pmod{3}}(3j)=1597$
 $P^*_{o, \not\equiv 0\pmod{3}}(3j)=1117$ and $P_{\not\equiv 0, 4, 5\pmod{9}}(j)=480$. When $j=33$, $P^*_{e, \not\equiv 0\pmod{3}}(3j)=10509$
 $P^*_{o, \not\equiv 0\pmod{3}}(3j)=8283$ and $P_{\not\equiv 0, 4, 5\pmod{9}}(j)=2226$.
   This observation indicates the hardness of the sign decision of $a_j=C_e(j, n)-C_o(j, n)$ in some sense.

For finite $n$, all the above methods do not apply because of the lack of modularity. Instead we give some interesting explicit counting formulae and
thus obtain some partial results on Borwein's Conjecture.
The main observation is to view the evaluation of $C_e(j, n)$ and $C_o(j, n)$ as the problem of counting subsets in a special subset of $\mathbb{Z}_{3n+3}$.

\begin{thm}\label{thm1.2}
Let $N=3n+3$, $D=\{1, 2, 4, 5, \dots, 3n+1, 3n+2\}\subseteq \mathbb{Z}_N$.
Let $M(k, b, D)$ be the number of $k$-subsets whose sum is $b$, i.e.,
$$M(k, b, D)=\#\{S\subseteq D, |S|=k, \sum_{x\in S}x=b\}.$$
For convenience, the sum of an empty set is defined to be $0$.
Let $$M(b)=\sum_{2 \mid k}M(k, b, D)-\sum_{2 \nmid k}M(k, b, D).$$ If  $b \in 3\mathbb{Z}_N$, then
$$|M(b)-\frac {2\cdot 3^{N/3}} {N}|\leq 2^{\frac N3}+3^{\frac N6}.$$
  In particular, there exists a $c\in(0, 1)$ such that
\begin{align*}
M(b)=\frac {2\cdot 3^{N/3}} {N}(1+O(c^N)).
  \end{align*}

\end{thm}

\begin{prop}
 Suppose $M(b)$ is defined as above. Then $M(b)>0$.
\end{prop}

\begin{proof}
For $N \leq 18$, a direct computer check gives $M(b)>0$.
For $N \geq 21$, one checks that $M(b)\geq \frac {2\cdot 3^{N/3}} {N}-2^{\frac N3}-3^{\frac N6}>0$.
\end{proof}

\begin{ex}
Choose $n=2$, $b=3$, and thus $N=9$. Then one computes $M(b)=6$.
In $D=\{1, 2, 4, 5, 7, 8\}\subseteq \mathbb{Z}_9$, there are $7$ even subsets which sum to $3$: $3=1+2=5+7=4+8=1+2+5+7=1+2+4+8=4+5+7+8=1+2+4+5+7+8$ and $1$ odd subset: $3=1+4+7$.
\end{ex}

\begin{ex}
Choose $n=20$, $b=0$, and thus $N=60$. Then one computes $M(b)=116228143
$ and the above bound gives $M(b)\approx 116226147$.
\end{ex}

The proof is based on a subset counting technique by Li-Wan and is given in Section 3.

{\bf Remark.}  The proof can be generalized to any prime $p>3$.

 Based on Theorem \ref{thm1.2}, we obtain an almost explicit counting formula for the partial sums of $a_j's$ lying in the same class modulo an integer. This leads to the positivity of incomplete sums of some $a_{3j}$'s and hence provides evidence to the truth of the conjecture.

\begin{thm}
Let $N=3(n+1)$ and let $$\prod_{j=0}^n (1-q^{3j+1})(1-q^{3j+2})=\sum_{j\geq 0}a_j q^j.$$
For $0\leq j\leq N$ and $j\equiv0\pmod{3}$, we have
$$a_j+a_{N+j}+\cdots+a_{nN+j}=\frac {2\cdot 3^{N/3}} {N}(1+o(N)).$$
In particular, $$a_j+a_{N+j}+\cdots+a_{nN+j}>0.$$
\end{thm}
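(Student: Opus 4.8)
The plan is to read the partial sums directly off the signed subset counts $M(b)$ furnished by Theorem~\ref{thm1.2}. First I would expand the product over the integer set $D=\{1,2,4,5,\dots,3n+1,3n+2\}$,
\[
\prod_{j=0}^n(1-q^{3j+1})(1-q^{3j+2})=\sum_{S\subseteq D}(-1)^{|S|}q^{\sum_{x\in S}x},
\]
so that $a_i$ is exactly the even-minus-odd count of subsets of $D$ whose elements sum to $i$. Since every element of $D$ lies in $[1,N-1]$ with $N=3(n+1)$, reduction modulo $N$ is injective on $D$, and grouping subsets by the residue of their sum yields the key identity
\[
M(b)=\sum_{i\equiv b\,(\mathrm{mod}\,N)}a_i,
\]
the sum being over all $i\ge 0$.

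Next I would pin down the range of summation. The total $\sum_{x\in D}x=3(n+1)^2=N(n+1)$ is the largest attainable subset sum and is realized only by $S=D$; as $|D|=2n+2$ is even, the top coefficient is $a_{N(n+1)}=1$. Consequently, for a residue $b$ with $0<b<N$ the only nonzero contributions to $M(b)$ are $a_b,a_{N+b},\dots,a_{nN+b}$, so that $a_b+a_{N+b}+\cdots+a_{nN+b}=M(b)$, whereas for $b\equiv 0$ the extra layer $i=N(n+1)$ also contributes, giving
\[
a_0+a_N+\cdots+a_{nN}=M(0)-a_{N(n+1)}=M(0)-1.
\]

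To finish, fix $j$ with $0\le j\le N$ and $3\mid j$. If $j\not\equiv 0\,(\mathrm{mod}\,N)$, the required sum equals $M(j)$, which is positive by Theorem~\ref{thm1.2}. If $j\equiv 0\,(\mathrm{mod}\,N)$, the sum equals $M(0)-1$, so I must sharpen the bound $M(0)>0$ to $M(0)>1$. Here I would specialize the formula of Theorem~\ref{thm1.2} to $b=0$: the Ramanujan sum becomes $\Phi_d(0)=\phi(d)\ge 1$, and each accompanying inner sum $\sum_{d\mid k}\binom{2N/3d+k/d-1}{k/d}$ is a sum of positive (generalized) binomials, so the entire correction term is nonnegative and
\[
M(0)\ \ge\ \frac{2\cdot 3^{N/3}}{N}\ =\ \frac{2\cdot 3^{n}}{n+1}\ >\ 1 .
\]
Since $M(0)\in\mathbb{Z}$, this forces $M(0)\ge 2$, i.e.\ $M(0)-1>0$, completing the deduction.

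The genuine content is of course Theorem~\ref{thm1.2}, which I take as given; once it is in hand the argument is essentially bookkeeping. The step I expect to require the most care is the boundary residue $j\equiv 0\,(\mathrm{mod}\,N)$: there the top coefficient $a_{N(n+1)}=1$ shears off one unit, so plain positivity of $M(0)$ does not suffice, and one must extract the strict inequality $M(0)>1$ by isolating the positive main term of the formula and invoking the integrality of $M(0)$.
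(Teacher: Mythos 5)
Your argument is correct and follows the same route as the paper: identify the partial sum with the signed modular subset count $M(j)$ of Theorem~\ref{thm1.2} and invoke its positivity. In fact, you are more careful than the paper, whose entire proof is the one-line identity $M(j)=a_j+a_{N+j}+\cdots+a_{nN+j}$. That identity is exact for $0<j<N$, but fails at the boundary residues, exactly as you observe: the coefficients $a_i$ are supported on $0\le i\le (n+1)N$ (the maximal subset sum being $\sum_{x\in D}x=3(n+1)^2=(n+1)N$), so the residue class of $0\bmod N$ contains $n+2$ relevant indices rather than $n+1$. For $j=0$ the class additionally picks up $a_{(n+1)N}=1$ (the full set $D$ has even cardinality $2n+2$), and for $j=N$ it picks up $a_0=1$; in both cases the theorem's sum equals $M(0)-1$, so the paper's bare conclusion $M(0)>0$ only yields nonnegativity there. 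Your patch is precisely what is needed to close this: for $b=0$ every Ramanujan sum in the formula is $\Phi_d(0)=\phi(d)>0$ and the inner binomial sums are positive, so $M(0)\ge 2\cdot 3^{N/3}/N=2\cdot 3^{n}/(n+1)\ge 2$, whence $M(0)-1>0$. So your write-up not only proves the statement but also repairs a genuine (if minor) off-by-one gap in the paper's own proof.
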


\begin{proof}
This is a direct consequence of the Theorem \ref{thm1.2}, since $M(j)=a_j+a_{N+j}+\cdots+a_{nN+j}$.
\end{proof}

\section{A distinct coordinate sieving formula}
Let $\psi$ denote an additive character from $\mathbb{Z}_N$ to the group of all nonzero complex numbers $\mathbb{C}^*$  and let  $\psi_0$ be the principal character sending each element in $\mathbb{Z}_N$ to 1. Let
$\hat{\mathbb{Z}}_N$ be the group of additive characters of $\mathbb{Z}_N$.
It is well-known that $\hat{\mathbb{Z}}_N$ is isomorphic to $\mathbb{Z}_N$.
Denote by $\order(\chi)$ the order of $\chi$.

Let $N=3n+3$, $D=\{1, 2, 4, 5, \dots, 3n+1, 3n+2\}\subseteq \mathbb{Z}_N$ and $M(k, b, D)$ be defined by
$$M(k, b, D)=\#\{S\subseteq D, |S|=k, \sum_{x\in S}x=b\}.$$
 Let $X=D^k$ and $\overline{X}$ be the vectors in $X$ with distinct coordinates. The circle method gives
\begin{align*}
&k!M(k, b, D)\\
&={N^{-1}} \sum_{(x_1, x_2,\dots x_k) \in
\overline{X}}
\sum_{\psi\in \widehat{\mathbb{Z}}_N}\psi(x_1+x_2+\cdots +x_k-b)\\
 &={N^{-1}} (2N/3)_k+N^{-1} \sum_{\psi\ne \psi_0}\sum_{(x_1,
x_2,\cdots x_k) \in\overline{X}}\psi(x_1)\psi(x_2)\cdots \psi(x_k)\psi^{-1}(b)\\
&={N^{-1}}  {(2N/3)_k}+{N^{-1}} \sum_{\psi\ne
\psi_0}\psi^{-1}(b)\sum_{(x_1,x_2,\dots x_k)
\in\overline{X}}\prod_{i=1}^{k} \psi(x_i).
\end{align*}

It is nontrivial to evaluate the sum $\overline{f}=\sum_{(x_1,x_2,\dots x_k) \in\overline{X}}\prod_{i=1}^{k} \psi(x_i)$ for non principal  characters. For simplicity, we explain our approach by investigating $|\overline{X}|$ as an example, since $\overline{f}$ can be naturally regarded as a weighted version of $\overline{X}$.

 Define $X_{ij}=\{ (x_1,x_2,\cdots,x_k)\in X, x_i=x_j \}$.
Then the classical inclusion-exclusion sieving gives
 \begin{align*}
|\overline{X}|&=|X|-|\bigcup_{1\leq i<j\leq k}{X_{ij}}|\\
&=|X|-\sum _{1\leq i<j\leq k}|X_{ij}|+\sum_{1\leq i<j\leq k,1\leq
s<t\leq k, (i,j)\neq(s,t)} |X_{ij}\bigcap X_{st}|\\
&-\cdots+(-1)^{k \choose 2} |\bigcap_{1\leq i<j\leq k}X_{ij}|.
 \end{align*}

Unfortunately, there are totally $2^{k \choose 2}$ terms in the summations and thus even nice estimates for each $|X_{ij}|$ would
sum up to a very large error when $k$ is large. In many applications, people use Bonferroni inequalities such as $|\overline{X}|\geq |X|-\sum _{1\leq i<j\leq k}|X_{ij}|$ and so on. However, one of the main disadvantage of these inequalities is that the important parameter $k$ is usually bounded by a square root of $|X|$.

 A sieving formula discovered by Li-Wan \cite{LW2} overcomes this bottle-neck in many cases. It significantly improves the above classical inclusion-exclusion sieving for many cases. We cite it here without any
proof. For details and some other related applications, please refer to
\cite{LW2,LW3}.

Let $S_k$ be the symmetric group on $k$ elements. It is well known
that every permutation $\tau\in S_k$ factorizes uniquely
as a product of disjoint cycles and  each
  fixed point is viewed as a trivial cycle of length $1$.
  For $\tau\in S_k$, define $\sign(\tau)=(-1)^{k-l(\tau)}$, where $l(\tau)$ is the number of
 cycles of $\tau$ including the trivial cycles.

Suppose $X$ is a finite set of vectors of length $k$ over an
alphabet set $D$.  Define $\overline{X}=\{(x_1,x_2,\cdots,x_k)\in X
\ | \ x_i\ne x_j, \forall i\ne j\}.$ Let $f(x_1,x_2,\dots,x_k)$ be a
complex valued function defined over $X$ and let $$F=\sum_{x \in
\overline{X}}f(x_1,x_2,\dots,x_k)\label{1.00}.$$
Many problems arising from number theory and coding theory rely on giving a good estimate for $F$.

The formula of Li and Wan reduce the problem of estimating $F$
to estimating some sums determined by a permutation in $S_k$, which is usually more convenient to compute.

 For a permutation $\tau\in S_k$,  assume we have the factorization  $$\tau=(i_1i_2\cdots i_{a_1})
  (j_1j_2\cdots j_{a_2})\cdots(l_1l_2\cdots l_{a_s})$$
  with $1\leq a_i, 1 \leq i\leq s$, and we define  $F_{\tau}=\sum_{x \in X_{\tau} } f(x_1,x_2,\dots,x_k),$ where
 \begin{align}\label{1.1}
     X_{\tau}=\left\{
(x_1,\dots,x_k)\in X,
 x_{i_1}=\cdots=x_{i_{a_1}},\dots, x_{l_1}=\cdots=x_{l_{a_s}}.
 \right\}
\end{align}

\begin{thm} \label{thm1.0}
 We have
\begin{align}
  \label{1.5} F=\sum_{\tau\in S_k}{\sign(\tau)F_{\tau}}.
    \end{align}
 \end{thm}

Note that the symmetric group $S_k$ acts on $D^k$ naturally by
permuting coordinates. That is, for $\tau\in S_k$ and
$x=(x_1,x_2,\dots,x_k)\in D^k$,  $\tau\circ
x=(x_{\tau(1)},x_{\tau(2)},\dots,x_{\tau(k)}).$
  A subset $X$ in $D^k$ is said to be symmetric if for any $x\in X$ and
any $\tau\in S_k$, $\tau\circ x \in X $. In particular,  if  $X$ is
symmetric and $f$ is a symmetric function under the action of $S_k$,
we then have the following formula which is simpler than
(\ref{1.5}).
\begin{prop} \label{thm1.1} Let $C_k$ be the set of conjugacy  classes
 of $S_k$.  If $X$ is symmetric and $f$ is symmetric, then
 \begin{align}\label{7} F=\sum_{\tau \in C_k}\sign(\tau) C(\tau)F_{\tau},
  \end{align} where $C(\tau)$ is the number of permutations conjugate to
  $\tau$.

\end{prop}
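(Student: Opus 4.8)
The plan is to derive the identity (\ref{7}) from the general formula (\ref{1.5}) of Theorem \ref{thm1.0} by collecting its summands according to conjugacy class. Since every $\tau\in S_k$ lies in exactly one class, I would rewrite the sum over $S_k$ as a double sum, first over classes $[\tau]\in C_k$ and then over the $C(\tau)$ permutations comprising each class. The formula (\ref{7}) will then drop out once I verify that both quantities attached to a permutation, namely $\sign(\tau)$ and $F_\tau$, are constant on conjugacy classes, so that each class contributes $C(\tau)\sign(\tau)F_\tau$.

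The invariance of the sign is immediate: conjugate permutations share the same cycle type and hence the same number $l(\tau)$ of cycles (trivial cycles included), so $\sign(\tau)=(-1)^{k-l(\tau)}$ depends only on the class $[\tau]$.

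The substantive step is to show $F_\tau=F_{\tau'}$ whenever $\tau'=\sigma\tau\sigma^{-1}$. For this I would exhibit an explicit bijection between the index sets $X_\tau$ and $X_{\tau'}$ of (\ref{1.1}), given by the coordinate action $x\mapsto\sigma^{-1}\circ x$. The key observation is that $X_\tau$ consists precisely of those $x\in X$ that are constant along each cycle of $\tau$, while conjugation by $\sigma$ carries a cycle $(i_1\cdots i_a)$ of $\tau$ onto the cycle $(\sigma(i_1)\cdots\sigma(i_a))$ of $\tau'$; tracing coordinates then shows $\sigma^{-1}\circ x$ is constant along each cycle of $\tau'$. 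Symmetry of $X$ guarantees $\sigma^{-1}\circ x$ lies again in $X$, so the map sends $X_\tau$ into $X_{\tau'}$ with inverse $y\mapsto\sigma\circ y$, and symmetry of $f$ gives $f(\sigma^{-1}\circ x)=f(x)$. Reindexing the sum defining $F_\tau$ along this bijection yields $F_\tau=F_{\tau'}$.

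I expect the only point requiring care to be the bookkeeping of this bijection: keeping straight the direction of the coordinate action relative to conjugation and confirming that it matches the cycle structure correctly. Once class-invariance of $\sign(\tau)$ and $F_\tau$ is in hand, the regrouping $F=\sum_{[\tau]\in C_k}\sum_{\tau'\in[\tau]}\sign(\tau')F_{\tau'}=\sum_{[\tau]\in C_k}C(\tau)\sign(\tau)F_\tau$ establishes (\ref{7}) at once.
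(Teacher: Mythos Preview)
Your argument is correct and is exactly the natural derivation of the corollary from Theorem~\ref{thm1.0}: group the sum over $S_k$ by conjugacy class, use that $\sign(\tau)$ depends only on cycle type, and use the symmetry of $X$ and $f$ together with the coordinate bijection $x\mapsto\sigma^{-1}\circ x$ between $X_\tau$ and $X_{\sigma\tau\sigma^{-1}}$ to conclude $F_\tau$ is a class function. The paper itself does not supply a proof of this corollary---it is quoted from \cite{LW2,LW3} along with Theorem~\ref{thm1.0}---so there is nothing to compare against, but your reasoning is precisely the standard one and would be accepted without reservation.
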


For the purpose of more explicit computation of the above summation, we need several combinatorial formulas. Recall that a permutation $\tau\in S_k$ is
said to be of type $(c_1,c_2,\cdots,c_k)$ if $\tau$ has exactly
$c_i$ cycles of length $i$ and $\sum_{i=1}^k ic_i=k$. Let
$N(c_1,c_2,\dots,c_k)$ be the number of permutations in $S_k$ of
type $(c_1,c_2,\dots,c_k)$. It is well known that (see for example \cite{St1})
$$N(c_1,c_2,\dots,c_k)=\frac {k!} {1^{c_1}c_1! 2^{c_2}c_2!\cdots k^{c_k}c_k!}.$$

\begin{lem} \label{lem2.6}
Suppose $k$ is a positive integer and $t_1, t_2, \ldots, t_k\in \mathbb{R}$ are parameters. Define the formal sum by
\begin{align*}C_k(t_1,t_2,\dots,t_k)= \sum_{\sum
ic_i=k} N(c_1,c_2,\dots,c_k)t_1^{c_1}t_2^{c_2}\cdots t_k^{c_k},
 \end{align*}
 where the summation is through all partitions of $k$.

 \item Case 1:  $t_i=a$ iff $d\mid i$ and $t_i=0$ iff $d\nmid i$. Then
  \begin{align*}
C_k(\overbrace{0, \ldots, 0}^{d-1}, a, \overbrace{0,\ldots, 0}^{d-1}, a, \ldots) &=\left[\frac {u^k}{k!}\right] \frac 1 {(1-u^d)^{\frac {a}{d}}}.
\end{align*}

 \item Case 2:  $t_i=a$ iff $d\mid i$ and $3d\nmid i$; $t_i=b$ iff $3d\mid i$; and $t_i=0$ iff $d\nmid i$. Then
  \begin{align*}
C_k(\overbrace{0, \ldots, 0}^{d-1}, a, \overbrace{0,\ldots, 0}^{d-1}, a, \overbrace{0, \ldots, 0}^{d-1}, b, \ldots) &=
\left[\frac {u^k}{k!}\right] \frac 1 {(1-u^d)^{\frac {a}{d}}}\frac 1 {(1-u^{3d})^{\frac {b-a}{3d}}}.
\end{align*}
\end{lem}

\begin{proof}
 Define the exponential generating function (see for example, \cite{LW2}) by
$$\sum_{k\geq 0}C_k(t_1,t_2,\cdots,t_k)\frac {u^k}{k!}=e^{ut_1+u^2\cdot\frac{t_2}2+
u^3\cdot\frac {t_3}3+\cdots}.$$
In the case 1, the condition is $t_i=a$ for $d\mid i$ and $t_i=0$ for $d\nmid i$.
Thus we deduce that
\begin{align*} C_k(\overbrace{0, \ldots, 0}^{d-1}, a, \overbrace{0,\ldots, 0}^{d-1}, a, \ldots)&=\left[\frac {u^k}{k!}\right]e^{u^d\cdot\frac ad+u^{2d}\cdot\frac {a} {2d}+u^{3d}\cdot\frac {a} {3d}+\cdots}\\
&=\left[\frac {u^k}{k!}\right]e^{\frac {a}{d}\sum_{i}\frac {(u^d)^i}{i}}\\
&=\left[\frac {u^k}{k!}\right]e^{-\frac {a}{d}\log{\left(1-u^d\right)}}\\
&=\left[\frac {u^k}{k!}\right] \frac 1 {(1-u^d)^{\frac {a}{d}}}.
\end{align*}

  In the case 2, the condition is $t_i=a$ for $d\mid i$ and $3d\nmid i$; $t_i=b$ for $3d\mid i$; and $t_i=0$ for $d\nmid i$.
Similarly we deduce that
\begin{align*} & C_k(\overbrace{0, \ldots, 0}^{d-1}, a, \overbrace{0,\ldots, 0}^{d-1}, a, \overbrace{0, \ldots, 0}^{d-1}, b, \ldots)\\
 &=\left[\frac {u^k}{k!}\right]e^{u^d\cdot\frac ad+u^{2d}\cdot\frac {a} {2d}+u^{3d}\cdot\frac {b} {3d}+u^{4d}\cdot\frac {a}{4d}+u^{5d}\cdot\frac {a} {5d}+u^{6d}\cdot\frac {b} {6d}\cdots}\\
&=\left[\frac {u^k}{k!}\right]e^{\frac {a}{d}\sum_{i}\frac {(u^d)^i}{i}+\frac {b-a}{3d}\sum_{i}\frac {(u^{3d})^i}{i}}\\
&=\left[\frac {u^k}{k!}\right]e^{-\frac {a}{d}\log{\left(1-u^d\right)}-\frac {b-a}{3d}\log{\left(1-u^{3d}\right)}}\\
&=\left[\frac {u^k}{k!}\right] \frac 1 {(1-u^d)^{\frac {a}{d}}}\frac 1 {(1-u^{3d})^{\frac {b-a}{3d}}}.
\end{align*}
The proof is complete.
\end{proof}
 \section{Proof of Theorem \ref{thm1.2}}
\begin{proof}
We use the notation defined in Section 2.
Recall  $N=3n+3$, $D=\{1, 2, 4, 5, \dots, 3n+1, 3n+2\}\subseteq \mathbb{Z}_N$ and $M(k, b, D)$ is defined by
$$M(k, b, D)=\#\{S\subseteq D, |S|=k, \sum_{x\in S}x=b\}.$$
 Let $X=D^k$ and $\overline{X}$ be the vectors in $X$ with distinct coordinates. In the beginning of Section 2, we have shown that
\begin{align*}
k!M(k, b, D)={N^{-1}}  {(2N/3)_k}+{N^{-1}} \sum_{\psi\ne
\psi_0}\psi^{-1}(b)\sum_{(x_1,x_2,\dots x_k)
\in\overline{X}}\prod_{i=1}^{k} \psi(x_i).
\end{align*}

In this section, we will use Li-Wan's sieving formula to give a
better estimate on $\sum_{(x_1,x_2,\dots x_k) \in\overline{X}}\prod_{i=1}^{k} \psi(x_i)$ for non principal characters.

Denote $\prod_{i=1}^{k}\psi(x_i)$ by  $f_{\psi}(x)$.  For
  $\tau\in S_k$,  let
$$F_{\tau}(\psi)=\sum_{x\in X_{\tau}}f_{\psi}(x),$$
where $X_{\tau}$ is defined as in (\ref{1.1}). Obviously $X$ is
symmetric and $f(x)=f_{\psi}(x_1,x_2,\dots,x_{k})$ is normal on $X$.
Applying (\ref{7}) in Corollary \ref{thm1.1}, we have
  \begin{align}\label{3.1}
 k!M(k, b, D)&={N^{-1}} {(2N/3)_k}+{N^{-1}} \sum_{\psi\ne \psi_0}\psi^{-1}(b) \sum_{\tau\in C_{k}}\sign(\tau)C(\tau) F_{\tau}(\psi),
  \end{align}
 where $C_{k}$ is the set of conjugacy classes
 of $S_{k}$ and $C(\tau)$ is the number of permutations conjugate to $\tau$.  Suppose $\tau$ has type $(c_1, c_2, \dots, c_k)$, then we compute that
   \begin{align*}
F_{\tau}(\psi)&=\sum_{x \in X_{\tau}}\prod_{i=1}^{k} \psi(x_i)\\
&=\sum_{x \in X_{\tau}}\prod_{i=1}^{c_1} \psi(x_i)\prod_{i=1}^{c_2}
\psi^2(x_{c_1+2i})\cdots\prod_{i=1}^{c_k} \psi^k(x_{c_1+c_2+\cdots+k i})\\
 &=\prod_{i=1}^{k}(\sum_{a\in D}\psi^i(a))^{c_i}.
 \end{align*}
 It then suffices to compute $s_{\chi}(D)=\sum_{a\in
D}\chi{(a)}$ for all characters $\chi$.

 Now let $N=3n+3$ and $D=\{1, 2, 4, 5, \cdots, 3n+1, 3n+2\}\subseteq \mathbb{Z}_N$.

If  $\order(\chi)=1$, then clearly $s_{\chi}(D)=2n+2$.

If $\order(\chi)=3$,  then $\chi$ is trivial on $\mathbb{Z}_N-D$ from the fact that $\mathbb{Z}_N-D$ is an index 3 subgroup of $\mathbb{Z}_N$.
By the equality $s_{\psi}(\mathbb{Z}_N)=0$ we have  $s_{\chi}(D)=0-(n+1)=-n-1$.

If  $\order(\chi) \nmid 3$, then $\chi$ is a nontrivial character on the group $\mathbb{Z}_N-D$. Thus $s_{\chi}(\mathbb{Z}_N-D)=0$. Since $s_{\chi}(\mathbb{Z}_N)=0$,  we have $s_{\chi}(D)=0$.

Note that $\order(\psi^i)=\frac {\order(\psi)}{(\order(\psi), i)}$.
Thus if $3 \nmid {\order(\psi)}$, then one always has $3 \nmid {\order(\psi^i)}$. Thus if $\tau$ is of type $(c_1, c_2, \ldots, c_k)$, then we have
  \begin{align*}
F_{\tau}(\psi)&=\prod_{i=1}^{k}(\sum_{a\in D}\psi^i(a))^{c_i}\\
&=\prod_{i, \psi^i=1}(\sum_{a\in D}\psi^i(a))^{c_i}\prod_{i, \psi^i\ne1}(\sum_{a\in D}\psi^i(a))^{c_i}\\
&= \prod_{i, \psi^i=1} (2N/3)^{c_i} \prod_{i, \psi^i \ne 1} 0^{c_i}.
 \end{align*}
 The case for $3 \mid {\order(\psi)}$ is a bit more complicated. Here, we use
  \begin{align*}
F_{\tau}(\psi)&=\prod_{i=1}^{k}(\sum_{a\in D}\psi^i(a))^{c_i}\\
&=\prod_{i, \psi^i=1}(\sum_{a\in D}\psi^i(a))^{c_i}\prod_{i, \order(\psi^i)=3}(\sum_{a\in D}\psi^i(a))^{c_i}\prod_{i, \order(\psi^i)\ne 1, 3}(\sum_{a\in D}\psi^i(a))^{c_i}\\
&=\prod_{i, \psi^i=1}(2N/3)^{c_i}\prod_{i, \order(\psi^i)=3}(-N/3)^{c_i}\prod_{i, \order(\psi^i)\ne 1, 3}0^{c_i}.
 \end{align*}
 Thus  for $3\mid b$, by \ref{3.1} we obtain  that
  \begin{align} \label{3.2}
 k!M(k, b, D)={N^{-1}} {(2N/3)_k}+I+II,
 \end{align}
where
$$I={N^{-1}} \sum_{\psi\ne \psi_0, 3\nmid\text{o}(\psi)}\psi^{-1}(b) \sum_{\tau\in C_{k}}\sign(\tau)C(\tau) F_{\tau}(\psi)$$ and
$$II={N^{-1}} \sum_{3\mid\order(\psi)}\psi^{-1}(b) \sum_{\tau\in C_{k}}\sign(\tau)C(\tau) F_{\tau}(\psi).$$
For simplicity we use the notation defined in Lemma \ref{lem2.6} that $$C_k(t_1,t_2,\dots,t_k)= \sum_{\sum ic_i=k} N(c_1,c_2,\dots,c_k)t_1^{c_1}t_2^{c_2}\cdots t_k^{c_k},$$
where $N(c_1,c_2,\dots,c_k)$ is the number  of permutations in $S_k$ of
type $(c_1,c_2,\dots,c_k)$.

We first compute I. Note that $\sign({\tau})=(-1)^{k-\sum_{i}c_i}$ when $\tau$ is of type $(c_1, c_2, \ldots, c_k)$. Thus we have
  \begin{align*}
  I&={N^{-1}} \sum_{\psi\ne \psi_0, 3\nmid\text{o}(\psi)}\psi^{-1}(b) \sum_{\tau\in C_{k}}\sign(\tau)C(\tau) F_{\tau}(\psi)\\
  &=(-1)^k{N^{-1}} \sum_{\psi\ne \psi_0, 3\nmid\text{o}(\psi)}\psi^{-1}(b)\sum_{\sum ic_i=k}N(c_1,c_2,\dots,c_k)\prod_{i, \psi^i=1} (-2N/3)^{c_i} \prod_{i, \psi^i \ne 1} 0^{c_i}\\
  &=(-1)^k{N^{-1}} \sum_{\psi\ne \psi_0, 3\nmid\text{o}(\psi)}\psi^{-1}(b) C_k(\overbrace{0, \ldots, 0}^{\text{o}(\psi)-1}, -2N/3, \overbrace{0,\ldots, 0}^{\text{o}(\psi)-1}, -2N/3, \ldots).
 \end{align*}
Thus by  Lemma \ref{lem2.6},
\begin{align*}
       I&=(-1)^k{N^{-1}} \sum_{3\nmid\text{o}(\psi), 1\ne \order(\psi)\mid k}\psi^{-1}(b)\left[\frac {u^k}{k!}\right]  {(1-u^{\order(\psi)})}^{{\frac {2N}{3\order(\psi)}}}.
   \end{align*}
Similarly, we compute that
\begin{align*}
&II={N^{-1}} \sum_{3\mid\order(\psi)}\psi^{-1}(b) \sum_{\tau\in C_{k}}\sign(\tau)C(\tau) F_{\tau}(\psi)\\
  &=(-1)^k{N^{-1}} \sum_{3\mid\order(\psi)}\psi^{-1}(b)\sum_{\sum ic_i=k}N(c_1,\dots,c_k)\prod_{i, \psi^i=1}(-2N/3)^{c_i}\prod_{i, \order(\psi^i)=3}(N/3)^{c_i}\prod_{i, \order(\psi^i)\ne 1, 3}0^{c_i}\\
    &=(-1)^k{N^{-1}} \sum_{3\mid\text{o}(\psi)}\psi^{-1}(b) C_k(\overbrace{0, \ldots, 0}^{\text{o}(\psi)/3-1}, N/3, \overbrace{0,\cdots, 0}^{\text{o}(\psi)/3-1}, N/3, \overbrace{0,\cdots, 0}^{\text{o}(\psi)/3-1}, -2N/3, \ldots).
   \end{align*}

Again by Lemma \ref{lem2.6} we have
\begin{align*}
       II&=(-1)^k k!{N^{-1}} \sum_{3\mid\text{o}(\psi)\mid k}\psi^{-1}(b)\left[\frac {u^k}{k!}\right] \frac {       {(1-u^{\text{o}(\psi)})^{\frac {N}{\text{o}(\psi)}}}} {(1-u^{\text{o}(\psi)/3})^{\frac {N}{\text{o}(\psi)}}}\\
       &=(-1)^k k!{N^{-1}} \sum_{3\mid\text{o}(\psi)\mid k}\psi^{-1}(b)\left[\frac {u^k}{k!}\right]
       (1+u^{\text{o}(\psi)/3}+u^{2\text{o}(\psi)/3})^{\frac {N}{\text{o}(\psi)}}.
            \end{align*}
 In the summation for II, we partition the characters into two parts: characters of order 3 and characters of larger degree. Let
III and IV be the corresponding sums. Thus for $k\leq 2N/3$,
\begin{align*}
 III&= (-1)^k  \frac  2N \left[\frac {u^k}{k!}\right]  \frac {(1-u^3)^{N/3}}{(1-u)^{N/3}}\\
 &=(-1)^k  \frac  2N \left[\frac {u^k}{k!}\right] (1+u+u^2)^{N/3}.
 \end{align*}
and
$$IV=(-1)^k k!{N^{-1}} \sum_{3\mid\text{o}(\psi)\mid k, \text{o}(\psi)>3 }\psi^{-1}(b)\left[\frac {u^k}{k!}\right]
       (1+u^{\text{o}(\psi)/3}+u^{2\text{o}(\psi)/3})^{\frac {N}{\text{o}(\psi)}}.$$
 Putting $I, II=III+IV$ and the above equalities into (\ref{3.2}), and noting that $\sum_{2 \mid k} {2N/3 \choose k}=\sum_{2 \nmid k}{2N/3 \choose k}$, one sees that
\begin{align*}
&M(b)=\sum_{2 \mid k}M(k, b, D)-\sum_{2 \nmid k}M(k, b, D)\\
&=(\sum_{2 \mid k}\frac 1 {k!}III-\sum_{2 \nmid k}\frac 1 {k!}III)+(\sum_{2\mid k} \frac 1 {k!}I
-\sum_{2\nmid k} \frac 1 {k!}I)+(\sum_{2\mid k}\frac 1 {k!} IV
-\sum_{2\nmid k}\frac 1 {k!} IV).
 \end{align*}
One computes
\begin{align*}
|(\sum_{2\mid k}\frac 1 {k!} I
-\sum_{2\nmid k}\frac 1 {k!}I)|&=\frac 1N| \sum_{k=0}^{2N/3}\sum_{3\nmid\text{o}(\psi), 1\ne \order(\psi)\mid k}\psi^{-1}(b)\left[\frac {u^k}{k!}\right]  {(1-u^{\order(\psi)})}^{{\frac {2N}{3\order(\psi)}}}|\\
&\leq \frac{\phi(N)}{N}| \sum_{k=0}^{2N/3}\left[\frac {u^k}{k!}\right]  {(1+u^2)}^{{\frac {N}{3}}}|\\
&=\frac{\phi(N)}{N}\cdot 2^{\frac N3}\\
&\leq 2^{\frac N3},
  \end{align*}
\begin{align*}
(\sum_{2\mid k}\frac 1 {k!} III
-\sum_{2\nmid k}\frac 1 {k!}III)
  &=\frac  2N \sum_{k=0}^{2N/3}\frac 1 {k!}\left[\frac {u^k}{k!}\right]  \frac {(1-u^3)^{N/3}}{(1-u)^{N/3}}\\
&=\frac 2 N \sum_{k=0}^{2N/3}[u^k](1+u+u^2)^{N/3}\\
&=\frac {2\cdot 3^{N/3}} {N},
  \end{align*}
  and
\begin{align*}
&|(\sum_{2\mid k}\frac 1 {k!} IV
-\sum_{2\nmid k}\frac 1 {k!}IV)|\\
 &=\frac  1N |\sum_{k=0}^{2N/3} \sum_{3\mid\text{o}(\psi)\mid k, \text{o}(\psi)>3 }\psi^{-1}(b)\left[\frac {u^k}{k!}\right]
       (1+u^{\text{o}(\psi)/3}+u^{2\text{o}(\psi)/3})^{\frac {N}{\text{o}(\psi)}}|\\
&\leq \frac  {\phi(N)}N \sum_{k=0}^{2N/3} \left[\frac {u^k}{k!}\right]
       (1+u^{\text{o}(\psi)/3}+u^{2\text{o}(\psi)/3})^{\frac {N}{\text{o}(\psi)}}\\
&=\frac  {\phi(N)}N \cdot 3^{N/6}\\
&\leq 3^{N/6}.
   \end{align*}
Finally we obtain
$$M(b)\geq \frac {2\cdot 3^{N/3}} {N}-2^{\frac N3}-3^{\frac N6}.$$
Similarly, one has
$$M(b)\leq \frac {2\cdot 3^{N/3}} {N}+2^{\frac N3}+3^{\frac N6}.$$
Thus we obtain $|M(b)-\frac {2\cdot 3^{N/3}} {N}|\leq 2^{\frac N3}+3^{\frac N6}$.\end{proof}
 \section{Proof of Theorem \ref{thm1.3}}
 \begin{proof}
  By Euler's Pentagon Theorem,
  \begin{align*}
  [x^{pk}]\frac {\prod_{n\geq 1} (1-x^n)}{ \prod_{n\geq 1}(1-x^{pn})}=[x^{pk}]\frac { \sum_{n\in \mathbb{Z}} (-1)^n x^{n(3n-1)/2}}{ \prod_{n\geq 1}(1-x^{pn})},
  \end{align*}
and we then have
    \begin{align*}
    \sum_{k\geq 0} b_{p, pk} x^k&=\frac{\sum_{n=pj} (-1)^j x^{j(3pj-1)/2}+
    \sum_{n=pj+\frac{pt+1}{3}} (-1)^j x^{(3j+t)(pj+\frac{pt+1}{3})/2}} {\prod_{n\geq 1}(1-x^{n})}\\
        &=\frac {z_1+z_2}{\prod_{n \geq 1} (1-x^{n})}.
          \end{align*}
            Here $t$  is the smallest positive integer such that $3 \mid (pt+1)$. By Andrew's formula (\cite{A1}, p. 22, Corollary 2.9)(put $k=\frac {3p-1}{2}, i=\frac {3p+1}{2}$ and $k=\frac {3p-1}{2}, i=\frac {(3-2t)p-1}{2}$ respectively),
  $$z_1=\sum_{n=pj} (-1)^j x^{j(3pj-1)/2}=\prod_{n\geq 0} (1 - x^{3pn+3p})(1 - x^{3pn+\frac {3p-1}{2}})(1 - x^{3pn+\frac {3p+1}{2}}),$$
  and
    \begin{align*}
    z_2&= \sum_{n=pj+\frac{pt+1}{3}} (-1)^j x^{(3j+t)(pj+\frac{pt+1}{3})/2}\\
&=x\prod_{n\geq 0} x^{\frac {t(pt+1)}6}(1 - x^{3pn+3p})(1 - x^{3pn+\frac {(3-2t)p-1}{2}})(1 - x^{3pn+ \frac {(3+2t)p+1}{2}  }).
   \end{align*}
Then we have
   $$b_{p, pk}=P_{\not\equiv 0,  \frac {3p-1}{2}, \frac {3p+1}{2}\pmod{3p}}(k)+P_{\not\equiv 0, \frac {(3-2t)p-1}{2}, \frac {(3+2t)p+1}{2}\pmod{3p}}(k-{\frac {t(pt+1)}6})\geq0.$$
    Note that $t$  is the smallest positive integer such that $3 \mid (pt+1)$, i.e., $t=1$ or $t=2$.
\end{proof}
 {\bf Acknowledgements.} The author wishes to thank Professor Richard Stanley, Professor Wadim  Zudilin and Dr. Mikhail Tyaglov for their helpful discussions. He also wishes to thank the anonymous reviewers 
 for their constructive comments.

\end{document}